\def\ssign{\textsection\nobreak\hspace{1pt plus 0.3pt}}
\let\origsection=\section 
\def\mysection{\@mystartsection{section}{1}\z@{.7\linespacing\@plus\linespacing}{.5\linespacing}{\normalfont\scshape\centering\ssign}}
\def\section{\@ifstar{\origsection*}{\mysection}}
\def\appendix{\par\c@section\z@ \c@subsection\z@
   \let\sectionname\appendixname
   \let\section=\origsection
   \def\thesection{\@Alph\c@section}} 
\def\@mystartsection#1#2#3#4#5#6{\if@noskipsec \leavevmode \fi
 \par \@tempskipa #4\relax
 \@afterindenttrue
 \ifdim \@tempskipa <\z@ \@tempskipa -\@tempskipa \@afterindentfalse\fi
 \if@nobreak \everypar{}\else
     \addpenalty\@secpenalty\addvspace\@tempskipa\fi
 \@dblarg{\@mysect{#1}{#2}{#3}{#4}{#5}{#6}}}
\def\@mysect#1#2#3#4#5#6[#7]#8{\edef\@toclevel{\ifnum#2=\@m 0\else\number#2\fi}\ifnum #2>\c@secnumdepth \let\@secnumber\@empty
  \else \@xp\let\@xp\@secnumber\csname the#1\endcsname\fi
  \@tempskipa #5\relax
  \ifnum #2>\c@secnumdepth
    \let\@svsec\@empty
  \else
    \refstepcounter{#1}\edef\@secnumpunct{\ifdim\@tempskipa>\z@ \@ifnotempty{#8}{\@nx\enspace}\else
        \@ifempty{#8}{.}{\@nx\enspace}\fi
    }\@ifempty{#8}{\ifnum #2=\tw@ \def\@secnumfont{\bfseries}\fi}{}\protected@edef\@svsec{\ifnum#2<\@m
        \@ifundefined{#1name}{}{\ignorespaces\csname #1name\endcsname\space
        }\fi
      \@seccntformat{#1}}\fi
  \ifdim \@tempskipa>\z@ \begingroup #6\relax
    \@hangfrom{\hskip #3\relax\@svsec}{\interlinepenalty\@M #8\par}\endgroup
    \ifnum#2>\@m \else \@tocwrite{#1}{#8}\fi
  \else
  \def\@svsechd{#6\hskip #3\@svsec
    \@ifnotempty{#8}{\ignorespaces#8\unskip
       \@addpunct.}\ifnum#2>\@m \else \@tocwrite{#1}{#8}\fi
  }\fi
  \global\@nobreaktrue
  \@xsect{#5}}
\renewcommand{\PrintDOI}[1]{\doi{#1}}
\numberwithin{equation}{section}
\numberwithin{figure}{section}
\def\alabel{\upshape({\itshape \alph*\,})}
\theoremstyle{plain}
\newtheorem{thm}{Theorem}[section]
\newtheorem{prop}[thm]{Proposition}
\newtheorem{prob}[thm]{Problem}
\newtheorem{lemma}[thm]{Lemma}
\theoremstyle{definition}
\newtheorem{dfn}[thm]{Definition}
\let\eps=\varepsilon
\let\theta=\vartheta
\let\rho=\varrho
\let\phi=\varphi
\def\NN{\mathds N}
\def\RR{\mathds R}
\let\polishlcross=\l
\def\l{\ifmmode\ell\else\polishlcross\fi}
\def\moverlay{\mathpalette\mov@rlay}
\def\mov@rlay#1#2{\leavevmode\vtop{   \baselineskip\z@skip \lineskiplimit-\maxdimen
   \ialign{\hfil$\m@th#1##$\hfil\cr#2\crcr}}}
\newcommand{\charfusion}[3][\mathord]{
    #1{\ifx#1\mathop\vphantom{#2}\fi
        \mathpalette\mov@rlay{#2\cr#3}
      }
    \ifx#1\mathop\expandafter\displaylimits\fi}
\newcommand{\dcup}{\charfusion[\mathbin]{\cup}{\cdot}}
\newcommand{\vrhup}[1]{\scaleobj{0.6}{\scalerel*{\rightharpoonup}{#1}}}
\newcommand{\vrlhup}[1]{\scaleobj{0.6}{\scalerel*{\leftharpoonup}{#1}}}
\newcommand{\wrhup}{\scaleobj{0.6}{\scalerel*{\rightharpoonup}{W}}}
\def\seq#1{\ThisStyle{
	\mathord{\vbox{\offinterlineskip
		\ialign{\hfil##\hfil\cr
			\noalign{\kern+2.5\scriptspace}
			$\SavedStyle{}_{\smash{\,\vrhup{#1}}}$\cr
    		\noalign{\kern-0.5\scriptspace}
    		$\SavedStyle#1$\cr}}}}}
\def\lseq#1{\ThisStyle{
	\mathord{\vbox{\offinterlineskip
		\ialign{\hfil##\hfil\cr
			\noalign{\kern+2.5\scriptspace}
			$\SavedStyle{}_{\,\smash{\vrlhup{#1}}}$\cr
    		\noalign{\kern-0.5\scriptspace}
    		$\SavedStyle#1$\cr}}}}}
\def\wseq#1{\ThisStyle{
	\mathord{\vbox{\offinterlineskip
		\ialign{\hfil##\hfil\cr
			\noalign{\kern+2.5\scriptspace}
			$\SavedStyle{}_{\smash{\,\wrhup}}$\cr
    		\noalign{\kern-0.5\scriptspace}
    		$\SavedStyle#1$\cr}}}}}
\let\vec=\seq
\let\lra=\longrightarrow
\let\to=\lra
\newcommand{\pushright}[1]{\ifmeasuring@#1\else\omit\hfill$\displaystyle#1$\fi\ignorespaces}
\newcommand{\pushleft}[1]{\ifmeasuring@#1\else\omit$\displaystyle#1$\hfill\fi\ignorespaces}
\def\ex{\mathrm{ex}}
\def\gT{\mathfrak{T}}
\def\vpi{\vec\pi}
\begin{document}
\title[Relative Ordered Tur\'an densities]{Relative Tur\'an densities of ordered graphs}

\author[Chr. Reiher]{Christian Reiher}
\address{Fachbereich Mathematik, Universit\"at Hamburg, Hamburg, Germany}
\email{christian.reiher@uni-hamburg.de}

\author[V. R\"{o}dl]{Vojt\v{e}ch R\"{o}dl}
\address{Department of Mathematics, Emory University, Atlanta, USA}
\email{vrodl@emory.edu}

\author{Marcelo Sales}
\address{Department of Mathematics, University of California, Irvine, CA, USA}
\email{mtsales@uci.edu}

\author[M. Schacht]{Mathias Schacht}
\address{Fachbereich Mathematik, Universit\"at Hamburg, Hamburg, Germany}
\email{schacht@math.uni-hamburg.de}

\thanks{The second author is supported by NSF grant DMS~2300347 and
	the third author is supported by US Air Force grant FA9550-23-1-0298.}

\keywords{Tur\'an problems, ordered graphs, ascending paths}
\subjclass[2020]{05C35, 05C70}

\begin{abstract}
	We introduce a modification of the Tur\'an density of ordered graphs
	and investigate this graph parameter.
\end{abstract}

\maketitle

\section{Introduction}

\subsection{Unordered graphs}
Given an (unordered) graph $F$ and a natural number $n$ we write $\ex(n, F)$
for the maximal number of edges that a graph on $n$ vertices can have, if it
is $F$-free, i.e., if it has no subgraphs isomorphic to $F$. An averaging
argument shows that the
sequence $n\longmapsto \ex(n, F)/\binom n2$ is nonincreasing and, therefore,
the limit
\begin{equation}\label{eq:1614}
	\pi(F)=\lim_{n\to\infty}\frac{\ex(n, F)}{\binom n2}\,,
\end{equation}
known as the {\it Tur\'an density of $F$}, exists. Results of Erd\H{o}s,
Simonovits, and Stone~\cites{ErSi66, ErSt46} yield an exact formula for
this graph parameter. Specifically, if $F$ has at least one edge, then
\begin{equation}\label{eq:1632}
	\pi(F)=1-\frac 1{\chi(F)-1}\,,
\end{equation}
where~$\chi(F)$ denotes the chromatic number of $F$. With the exception of
the bipartite case, this gives a fairly complete picture.

One popular direction
of further study replaces the ambient host graph $K_n$, in which the extremal
$F$-free graphs are thought of as living and whose number of edges appears in
the denominator of~\eqref{eq:1614}, by other graphs. For instance, beginning
with the work of Kosto\v{c}ka~\cite{Ko76} people have been investigating Tur\'an problems
in hypercubes (see also~\cites{AKS, Ax}). Trying to optimise over the host graph,
however, is less interesting than it might appear at first. By averaging over all
permutations of $V(G)$ one can show that
\begin{equation}\label{eq:1654}
	\text{ \it
		every graph $G$ has an $F$-free subgraph $G'$ with at least $\pi(F)e(G)$
		edges. }
\end{equation}
Moreover, for every fixed graph $F$ this statement becomes false if we
replace~$\pi(F)$ by any larger constant (as can be seen by taking $G=K_n$
and letting $n$ tend to infinity).

\subsection{Ordered graphs}
From now on all graphs we consider will be {\it ordered}, that is they will be
equipped with a distinguished linear ordering of their vertex sets. Accordingly,
when we say that a graph $G$ contains another graph $F$ as a subgraph, this also
means that $F$ appears `correctly ordered' in $G$. Bearing this in mind, one can
define extremal numbers~$\wseq\ex(n, F)$ and Tur\'an densities $\vpi(F)$ as in the
unordered case. Research on these quantities was initiated
by Pach and Tardos~\cite{PT06}, who found the appropriate adaptation of~\eqref{eq:1632} to ordered graphs, namely
\begin{equation}\label{eq:1634}
	\vpi(F)=1-\frac 1{\chi_<(F)-1}\,.
\end{equation}
Here $\chi_<(F)$ denotes the so-called {\it interval chromatic number} of $F$,
defined to be the least number of colours required to colour $V(F)$ properly,
but with the additional constraint that every colour class needs to be an interval.

A notable example, where $\chi$ and $\chi_<$ differ significantly, is the
ascending path $P_k$ with $k$ edges, defined by $V(P_k)=[k+1]$ and
$E(P_k)=\{\{i, i+1\}\colon i\in [k]\}$. It is plain that $P_k$ is bipartite
and has Tur\'an density zero in the unordered sense; but due to
$\chi_<(P_k)=k+1$ we have
\[
	\vpi(P_k)=\frac{k-1}k\,.
\]

The averaging argument~\eqref{eq:1654} does not extend to ordered graphs,
because the class of (extremal) $F$-free graphs is, in general, not
closed under permutations of vertices. This leads us to a new interesting graph parameter.

\begin{dfn}\label{dfn:pim}
	Given an ordered graph $F$ we define~$\rho(F)$, called the {\it relative
			Tur\'an density of $F$}, to be the largest real number $\varsigma\in [0, 1]$
	with the following property: Every graph $G$ has an $F$-free subgraph $G'$
	with $e(G')\ge \varsigma e(G)$.
\end{dfn}
It follows from the definition that
\begin{equation}\label{eq:ub}
	\rho(F)\leq \vpi(F)
\end{equation}
for every ordered graph $F$ and equality holds for the ordered cliques $K_r$ on $r\geq 2$ vertices.
In the other direction we observe for any ordered graph $F$
\begin{equation}\label{eq:lb}
	\rho(F)\geq \frac{\l(F)-1}{2\l(F)}\,,
\end{equation}
where $\l(F)$ is number of edges of a longest monotone path in~$F$.
Given an ordered graph~$G$ we consider all maps $\phi\colon V(G)\lra [\l(F)]$.
For each of them we let~$G_\phi$ be the subgraph of~$G$ having all edges $xy$
with $x<y$ and $\phi(x)<\phi(y)$. Since there are no strictly monotone sequences
of length $\l(F)+1$ in $[\l(F)]$, all these graphs $G_\phi$ are $F$-free.
Moreover, on average~$G_\phi$ has
\[
	\frac{\l(F)-1}{2\l(F)}e(G)
\]
edges and thus there exists
some map~$\phi$ such that $G_\phi\subseteq G$ exemplifies the lower bound~\eqref{eq:lb}.

A result on shift graphs due to Arman, R\"odl, and Sales~\cite{ARS}
implies~$\rho(P_2)=\frac 14$, which shows that the lower  bound~\eqref{eq:lb} is optimal in this case.
Our main result generalises this to longer monotone paths and will be proved in \ssign\ref{sec:path}.

\begin{thm}\label{thm:main}
	We have $\rho(P_k)=\frac{k-1}{2k}$ for every $k\ge 2$.
\end{thm}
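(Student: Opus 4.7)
The lower bound $\rho(P_k) \geq (k-1)/(2k)$ is already established, so my goal is the matching upper bound: to exhibit ordered graphs $G_N$ with $e(G_N) \to \infty$ such that every $P_k$-free subgraph has at most $\bigl((k-1)/(2k) + o(1)\bigr) e(G_N)$ edges.

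The starting observation is a convenient characterisation of $P_k$-freeness. Every $P_k$-free ordered graph $G'$ admits a level labeling $\phi \colon V(G') \to [k]$ with $\phi(u) < \phi(v)$ whenever $u < v$ and $uv \in E(G')$: simply set $\phi(v) = 1 + \ell_v$, where $\ell_v$ is the length of a longest ascending path of $G'$ ending at $v$. Conversely, any such labeling yields a $P_k$-free subgraph $G_\phi := \{uv \in E(G) : u < v,\, \phi(u) < \phi(v)\}$. Hence the maximum $P_k$-free subgraph of $G$ equals $\max_{\phi \colon V(G) \to [k]} e(G_\phi)$. For a uniformly random $\phi$ one has $\mathbb{E}\, e(G_\phi) = \frac{k-1}{2k}\, e(G)$, exactly the averaging that gives the lower bound, so my task is to construct $G_N$ on which \emph{every} $\phi$ gives essentially this same average value, rather than just a randomly chosen one.

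Guided by the Arman--R\"odl--Sales result that achieves $\rho(P_2) = 1/4$ via a shift-graph host, I would look for $G_N$ among iterated or high-dimensional shift-type graphs, such as the graph whose vertices are $r$-subsets of $[M]$ in lexicographic order (for suitably chosen $r = r(k)$ and $M \to \infty$), and whose edges are the consecutive shift pairs $\{\{a_1, \ldots, a_r\}, \{a_2, \ldots, a_{r+1}\}\}$. The key strategy is that in such $G_N$, every ``structured'' labeling $\phi$ is penalised by the shift structure: each edge encodes a combinatorial configuration moving many coordinates at once, so a $\phi$ that is determined by only a few coordinates cannot concentrate large excess over the random average. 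To verify the bound for all $\phi$, I plan to pass to substructures on which $\phi$ becomes canonical via Ramsey-type reductions (such as the Erd\H{o}s--Rado canonical Ramsey theorem for $r$-uniform hypergraphs, or iterated hypergraph regularity), and then estimate the density of $\phi$-increasing edges on each canonical piece by direct combinatorial counting, patching into a global bound via a covering argument.

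The main obstacle is to control the ``non-trivial'' canonical types, namely those colorings $\phi$ whose dependence on a bounded-size subset of coordinates still allows $\phi(A)$ and $\phi(B)$ for a shift pair $A \sim B$ to be correlated in a way that, a priori, could push the $\phi$-increasing density above $\frac{k-1}{2k}$. In the $k = 2$ case of Arman--R\"odl--Sales the extremal canonical $\phi$ of this kind are understood explicitly, and the task for general $k$ is to extend that analysis while keeping track of the full $k$-fold coloring structure. The technical heart of the argument therefore lies in combining a multi-level shift construction with a careful case analysis of these canonical types to squeeze the upper bound down to exactly $\frac{k-1}{2k}$.
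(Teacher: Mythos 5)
Your reduction of the problem is correct and coincides with the paper's: a maximum $P_k$-free subgraph of $G$ is always of the form $G_\phi$ for some labeling $\phi\colon V(G)\to[k]$ (via the longest-ascending-path level function), so the upper bound amounts to finding hosts $G$ on which \emph{every} such labeling captures at most a $\bigl(\tfrac{k-1}{2k}+o(1)\bigr)$-fraction of the edges. But from that point on your text is a research plan rather than a proof, and the entire content of the upper bound is missing. You do not commit to a specific host graph (the lexicographically ordered $r$-subset shift graph is offered as a guess, with $r=r(k)$ unspecified), you do not verify that any such graph has the required property, and you explicitly defer the analysis of the "non-trivial canonical types" of labelings --- which you correctly identify as the technical heart --- to future work. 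There is no argument showing that a canonical-Ramsey reduction would leave only finitely many tractable types, nor any computation bounding the increasing-edge density for even one of them. As it stands, nothing beyond the (already known) lower bound is established.

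For comparison, the paper does not use shift graphs for general $k$; it uses the recursive quasirandom construction $G_\eps(n,d)$ of Arman--R\"odl--Sales, obtained by splitting $[n]$ into two halves, placing a quasirandom bipartite graph of density $2^{1-d}$ between them, and recursing with parameter $d-1$ on each half. The bound on $\max_\phi e(G_\phi)$ is then proved by induction on $d$: for a partition of $[n]$ into classes of densities $\alpha=(\alpha_1,\dots,\alpha_k)$, the number of increasing edges is controlled by a potential function $h_d(\alpha)=(d+2)(1-\|\alpha\|^2)+k\sum_{r=1}^d\frac1r$, and the induction step reduces to a quadratic inequality proved with the parallelogram law. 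Since $\|\alpha\|^2\ge 1/k$ always, this yields the factor $\tfrac{k-1}{k}$ of $e(G_\eps(n,d))=dn^2/2^{d+1}$ up to lower-order terms, i.e., exactly the claimed $\tfrac{k-1}{2k}$. If you want to pursue your shift-graph route you would need to supply an analogue of this quantitative control over all labelings, which is precisely the step your proposal leaves open.
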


We shall also show that, like many other variants of the Tur\'an density,~$\rho$ is invariant under taking
blow-ups. Given an ordered graph $F$ and a positive integer $t$ we shall write~$F(t)$
for the ordered graph obtained from $F$ by replacing each vertex $x$ by an
interval~$I_x$ of size $t$ and every edge $xy$ by all $t^2$ edges from $I_x$ to $I_y$.
We also require that for all vertices $x<y$ of $F$, all vertices in $I_x$ precede all
vertices in $I_y$ with respect to the ordering of $F(t)$. A standard supersaturation argument carried out in \ssign\ref{sec:blow}
yields the following.

\begin{prop}\label{lem:1620}
	For all ordered graphs $F$ and integers $t\geq 1$ we have $\rho(F(t))=\rho(F)$.
\end{prop}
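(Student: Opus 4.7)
The direction $\rho(F(t)) \geq \rho(F)$ is immediate from the definition: selecting one vertex from each interval $I_x$ of $F(t)$ in the natural order embeds $F$ into $F(t)$ as an ordered subgraph, so every $F$-free graph is automatically $F(t)$-free and any subgraph witnessing $\rho(F)$ for a host $G$ simultaneously witnesses $\rho(F(t))$. For the reverse inequality I fix $\varepsilon > 0$, choose $\varsigma_0 \in (\rho(F), \rho(F)+\varepsilon/2)$, and invoke the definition of $\rho(F)$ to obtain a graph $G_0$ on $n_0$ vertices such that every subgraph of $G_0$ with at least $\varsigma_0 e(G_0)$ edges contains a copy of $F$. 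The plan is to show that, for $s$ large enough, the blow-up $H = G_0(s)$ witnesses $\rho(F(t)) \leq \rho(F) + \varepsilon$; that is, every subgraph $H' \subseteq H$ with $e(H') \geq (\rho(F)+\varepsilon)e(H)$ contains $F(t)$.

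The first step is a supersaturation obtained by random projection. For each $z \in V(G_0)$ draw $u_z \in I_z$ independently and uniformly, and let $G^{\ast}$ be the random subgraph of $G_0$ with edge set $\{zz' \in E(G_0) : u_z u_{z'} \in E(H')\}$. A short calculation gives $\mathbb{E}[e(G^{\ast})] = e(H')/s^2 \geq (\rho(F)+\varepsilon)e(G_0)$, and the elementary bound $\mathbb{E}[X] \leq \mu + \Pr[X \geq \mu](M - \mu)$, applied with $X = e(G^{\ast})$, $M = e(G_0)$, and $\mu = \varsigma_0 e(G_0)$, yields $\Pr[e(G^{\ast}) \geq \varsigma_0 e(G_0)] \geq (\varepsilon/2)/(1 - \varsigma_0)$. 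On this event $G^{\ast}$ contains a copy of $F$ by the choice of $G_0$, so the expected number of copies of $F$ in $G^{\ast}$ is bounded below by a positive constant $c_0$. Unpacking the expectation translates this into the supersaturation statement that $H'$ contains at least $c_0 s^v$ ordered copies of $F$ whose vertices lie in pairwise distinct intervals of the blow-up, where $v = |V(F)|$.

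The second step is to distill $F(t)$ from these many copies. Viewing the copies as the hyperedges of an ordered $v$-uniform hypergraph $\mathcal{H}$ on $V(H)$, I partition $V(H)$ into $vK$ consecutive intervals of equal size for a constant $K$ chosen large enough that the at most $O(|V(H)|^v/K)$ hyperedges placing two consecutive entries in the same block account for at most $c_0 s^v/2$ copies. Pigeonholing the remaining hyperedges over the $\binom{vK}{v}$ strictly increasing types returns blocks $B_{k_1} < \cdots < B_{k_v}$ such that the $v$-partite sub-hypergraph of $\mathcal{H}$ transversal to $B_{k_1} \times \cdots \times B_{k_v}$ has positive density. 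The hypergraph extension of the K\H{o}v\'ari--S\'os--Tur\'an theorem due to Erd\H{o}s then produces, for $s$ sufficiently large, sets $T_i \subseteq B_{k_i}$ of size $t$ such that every transversal in $T_1 \times \cdots \times T_v$ is an ordered copy of $F$ in $H'$. Since $B_{k_1} < \cdots < B_{k_v}$ implies $T_1 < \cdots < T_v$ as subsets of $V(H)$, for every edge $x_i x_j$ of $F$ the complete bipartite graph on $T_i \times T_j$ lies in $H'$, exhibiting the desired copy of $F(t) \subseteq H'$.

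The main subtlety is that the unordered hypergraph K\H{o}v\'ari--S\'os--Tur\'an theorem could return $t$-sets that interleave along $V(H)$, while $F(t)$ requires separated blocks with respect to the ordering; pre-partitioning into consecutive intervals before applying the $v$-partite version circumvents this, and everything else in the argument is routine.
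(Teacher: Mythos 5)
Your proof is correct and follows essentially the same route as the paper: the easy direction is identical, and for the upper bound both arguments blow up a counterexample graph $G_0$, establish supersaturation of crossing copies of $F$ by averaging over transversals (your random projection plus the Markov-type bound is exactly the paper's explicit sum over all transversals and its count of ``rich'' ones), and conclude with Erd\H{o}s's theorem on complete $f$-partite sub-hypergraphs of positive density. The only cosmetic difference is that you pigeonhole over a fresh partition into $vK$ consecutive blocks, whereas the paper pigeonholes over the blow-up classes $I_1,\dots,I_m$ themselves---the crossing copies already meet each class at most once, so the interleaving issue you guard against does not arise there.
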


\section{Blow-ups}
\label{sec:blow}

\begin{proof}[Proof of Proposition~\ref{lem:1620}]
	The estimate $\rho(F(t))\ge \rho(F)$ being clear, we shall show that
	\[
		\rho(F(t))\le \rho(F)+2\eps
	\]
	holds for every given $\eps>0$. To this end
	we take a graph $G$ which has no $F$-free subgraph~$G'$ of
	size $e(G')\ge (\rho(F)+\eps)e(G)$. Suppose for the sake of notational
	simplicity that $V(G)=[m]$ holds for some natural number $m$.

	We contend that for a sufficiently large integer $n$ (relative to~$m$, $v(F)$, $t$,
	and~$\eps$) the blow-up $H=G(n)$
	exemplifies $\rho(F(t))\le \rho(F)+2\eps$. In other words, we shall prove
	that every subgraph $H'$ of $H$ of size $e(H')\ge (\rho(F)+2\eps)e(H)$
	contains a copy of $F(t)$.

	Let $I_1, \dots, I_m$ be the vertex classes of $H$. By a {\it transversal}
	we shall mean an $m$-element subset of $V(H)$ intersecting each of these classes
	exactly once. Denoting the set of all transversals by $\gT$ we have $|\gT|=n^m$
	and
	\[
		\sum_{T\in\gT}e_{H'}(T)
		=
		n^{m-2}e(H')
		\ge
		(\rho(F)+2\eps) n^{m-2} e(H)
		=
		(\rho(F)+2\eps) n^m e(G)\,.
	\]
	Consequently, the subset
	\[
		\gT_\star=\{T\in \gT\colon e_{H'}(T)\ge (\rho(F)+\eps) e(G)
	\]
	of `rich' transversals satisfies
	\[
		(\rho(F)+2\eps) e(G) n^m
		\le
		(\rho(F)+\eps) e(G) n^m
		+
		e(G) |\gT_\star|\,,
	\]
	whence $|\gT_\star|\ge \eps n^m$.

	By our choice of $G$, each rich transversal $T\in\gT_\star$ contains a copy of $F$
	which {\it crosses} the partition $\{I_1, \dots, I_m\}$, i.e., whose vertex set
	intersects each class $I_i$ at most once. Conversely, every crossing copy
	of~$F$ in~$H'$ belongs to at most $n^{m-v(F)}$ transversals.
	For these reasons, there are at least $\eps n^{v(F)}$ crossing copies of $F$
	in $H'$. Setting $f=v(F)$ this yields $f$ indices $m(1)<\dots<m(f)$ in $[m]$
	such that the $f$-partite subgraph of $H'$ induced by $V_{m(1)}, \dots, V_{m(f)}$
	contains at least $\eps n^f/\binom mf$ crossing copies of $F$. So the $f$-partite
	$f$-uniform hypergraph with these vertex classes whose edges correspond to the
	crossing copies of $F$ has positive density. By a result of Erd\H{o}s~\cite{Er64},
	a sufficiently large choice of $n$ guarantees that
	this hypergraph contains
	a complete $f$-partite hypergraph with vertex classes of size $t$.
	Consequently, $H'$ has indeed a subgraph isomorphic to~$F(t)$.
\end{proof}

\section{Paths}
\label{sec:path}

Throughout this section, which is devoted to the proof of Theorem~\ref{thm:main},
we fix an integer~$k\ge 2$.
We note that the lower bound follows from the general inequality~\eqref{eq:lb}.
The corresponding upper bound requires the construction of appropriate graphs $G$.
Before introducing those, we shall discuss a quadratic inequality, which we require
later. We write
\[
	\Delta_k=\{(\alpha_1, \dots, \alpha_k)\in [0, 1]^k\colon
	\alpha_1+\dots+\alpha_k=1\}
\]
for the $(k-1)$-dimensional standard simplex. We designate elements of~$\RR^k$ by
lowercase greek letters and the coordinates of any $\xi\in\RR^k$ will be denoted
by $\xi_1, \dots, \xi_k$. For every nonnegative integer~$d$ the function
$h_d\colon \Delta_k\lra\RR$ is defined by
\[
	h_d(\alpha)
	=
	(d+2)(1-\|\alpha\|^2)+k\sum_{r=1}^d\frac 1r\,,
\]
where $\|\cdot\|$ refers to the Euclidean standard norm.

\begin{lemma}\label{lem:hd-rek}
	If $\alpha, \beta, \gamma\in \Delta_k$ satisfy $2\alpha=\beta+\gamma$
	and $d\ge 1$, then
	\[
		h_{d-1}(\beta)+h_{d-1}(\gamma)+4\sum_{1\le i<j\le k}\beta_i\gamma_j
		\le
		2h_d(\alpha)\,.
	\]
\end{lemma}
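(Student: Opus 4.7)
The plan is to parametrise symmetrically by setting $\beta = \alpha + \eta$ and $\gamma = \alpha - \eta$, where $\eta \in \RR^k$ satisfies $\sum_i \eta_i = 0$ so that $\beta, \gamma \in \Delta_k$. This automatically encodes $2\alpha = \beta + \gamma$ and leaves only one free perturbation $\eta$. Using the identities $\|\beta\|^2 + \|\gamma\|^2 = 2\|\alpha\|^2 + 2\|\eta\|^2$, $\sum_{i<j}\alpha_i\alpha_j = \tfrac{1-\|\alpha\|^2}{2}$ and $\sum_{i<j}\eta_i\eta_j = -\tfrac{\|\eta\|^2}{2}$ (the last one from $\sum_i\eta_i=0$), a direct expansion will show that the $(1-\|\alpha\|^2)$-contributions on the two sides cancel, the $\|\eta\|^2$-contributions combine to $2d\|\eta\|^2$, and the harmonic tails contribute $\tfrac{2k}{d}$; the only non-vanishing cross terms are the skew products $\alpha_i\eta_j$, so the claimed inequality reduces to
\[
	4\sum_{1\le i<j\le k}(\alpha_j\eta_i - \alpha_i\eta_j) \le 2d\|\eta\|^2 + \frac{2k}{d}\,.
\]

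Next I would rewrite the left-hand side by swapping summation orders as $4\sum_{i=1}^k \eta_i c_i$, where
\[
	c_i = \sum_{j>i}\alpha_j - \sum_{j<i}\alpha_j\,.
\]
Since the two sums defining $c_i$ are nonnegative and together equal $1-\alpha_i$, one has $|c_i| \le 1 - \alpha_i \le 1$, whence $\|c\|^2 \le k$. Cauchy--Schwarz then yields
\[
	4\Bigl|\textstyle\sum_i \eta_i c_i\Bigr| \le 4\sqrt{k}\,\|\eta\|\,.
\]

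Finally, AM--GM applied to $2d\|\eta\|^2$ and $\tfrac{2k}{d}$ gives $2d\|\eta\|^2 + \tfrac{2k}{d} \ge 4\sqrt{k}\,\|\eta\|$, which matches the bound above and closes the reduction. The hard part is the bookkeeping: one must verify that the expansion leaves exactly the coefficients $2d$ and $\tfrac{2k}{d}$, because these two constants are paired so that their geometric mean is $\sqrt{k}\,\|\eta\|$, matching the Cauchy--Schwarz bound. Any slackness at either step --- for example, bounding $|c_i|$ by something larger than $1 - \alpha_i$, or pairing $d\|\eta\|^2$ with a constant other than $\tfrac{k}{d}$ --- would fail to close the inequality, so the two ingredients have to fit together snugly.
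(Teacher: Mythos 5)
Your proposal is correct and follows essentially the same route as the paper's proof: the same parametrisation $\beta=\alpha+\eta$, $\gamma=\alpha-\eta$, the same reduction to $4\sum_i \eta_i c_i\le 2d\|\eta\|^2+\tfrac{2k}{d}$ with the same weights $c_i=\sum_{j>i}\alpha_j-\sum_{j<i}\alpha_j$ bounded by $1$ in absolute value. The only cosmetic difference is the finish --- you use Cauchy--Schwarz followed by a global AM--GM, whereas the paper completes the square coordinatewise via $0\le d\sum_i(\eta_i-c_i/d)^2$ --- and both yield the identical bound.
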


\begin{proof}
	Set $\eta=\beta-\alpha=\alpha-\gamma$. The parallelogram law tells us
	$\|\beta\|^2+\|\gamma\|^2=2(\|\alpha\|^2+\|\eta\|^2)$ and
	thus we only need to show
	\[
		2\sum_{1\le i<j\le k}\beta_i\gamma_j
		\le
		(1-\|\alpha\|^2)+(d+1)\|\eta\|^2+\frac kd\,.
	\]
	The left side evaluates to
	\begin{equation}\label{eq:1648}
		2\sum_{1\le i<j\le k}(\alpha_i+\eta_i)(\alpha_j-\eta_j)
		=
		2\sum_{1\le i<j\le k}(\alpha_i\alpha_j-\eta_i\eta_j)
		+
		2\sum_{1\le i\le k}\lambda_i\eta_i\,,
	\end{equation}
	where $\lambda_i=\sum_{j>i}\alpha_j-\sum_{j<i}\alpha_j$
	satisfies $|\lambda_i|\le 1$ due to $\alpha\in\Delta_k$.
	Because of $\sum_i\alpha_i=1$ and $\sum_i \eta_i=0$
	the double sum on the right side of~\eqref{eq:1648}
	simplifies to $1-\|\alpha\|^2+\|\eta\|^2$.
	Therefore, it remains to prove
	\[
		2\sum_{i=1}^k \lambda_i\eta_i
		\le
		d\|\eta\|^2+\frac kd\,.
	\]
	But this is clearly implied by
	\[
		0
		\le
		d\sum_{i=1}^k (\eta_i-\lambda_i/d)^2
		\le
		d\|\eta\|^2-2\sum_{i=1}^k \lambda_i\eta_i+d^{-1}\sum_{i=1}^k\lambda_i^2\,.
		\qedhere
	\]
\end{proof}

Arman, R\"odl, and Sales describe in~\cite{ARS}*{Definition~5} a class of
graphs~$G_\eps(n, d)$ that we shall need as well. If a real number $\eps\in (0, 1]$
and a nonnegative integer $d$ are given, such graphs $G_\eps(n, d)$ exist for all
sufficiently large multiples $n$ of $2^d$. The construction proceeds by recursion
on $d$. To begin with, for every $n\in\NN$ we let $G_\eps(n, 0)$ be the empty graph
on~$[n]$ without any edges. Now suppose that all graphs of the form $G_\eps(n, d-1)$
have already been defined and let $n$ be a sufficiently large integer divisible
by $2^d$.
Fix a quasirandom bipartite graph $B=B_\eps(n, d)$ with vertex
classes $[n/2]$, $(n/2, n]$ and density $2^{1-d}$. More precisely, the demands
on this bipartite graph are $e(B)=n^2/2^{d+1}$ and
\[
	e_B(X, Y)=\frac{|X||Y|}{2^{d-1}}\pm \frac{\eps n^2}{k2^{d+2}}
\]
for all subsets $X\subseteq [n/2]$ and $Y\subseteq (n/2, n]$. (It is well known
that such bipartite graphs exist for all sufficiently large numbers $n$ divisible by $2^{d+1}$; we refer
to the appendix of~\cite{ARS} for the standard probabilistic proof.)
Having chosen $B_\eps(n, d)$ we define $G_\eps(n, d)$ such that
\begin{enumerate}
	\item[$\bullet$] its subgraphs induced by $[n/2]$ and $(n/2, n]$ are isomorphic
	      to $G_\eps(n/2, d-1)$
	\item[$\bullet$] and its bipartite subgraph between $[n/2]$ and $(n/2, n]$ is
	      isomorphic to $B_\eps(n, d)$.
\end{enumerate}

For instance, $G_\eps(n, 1)=B_\eps(n, 1)$ is the complete bipartite graph with
vertex classes~$[n/2]$ and $(n/2, n]$. An easy induction on $d$ discloses
\begin{equation}\label{eq:2345}
	e(G_\eps(n, d))
	=
	\frac{dn^2}{2^{d+1}}\,,
\end{equation}
whenever the graph $G_\eps(n, d)$ is defined (cf.\ \cite{ARS}*{Eq.~(4)}).

\begin{lemma}
	Given $\eps>0$ suppose that $n$ and $d$ are such that the graph $G_\eps(n, d)$
	exists. If $[n]=V_1\dcup\dots\dcup V_k$
	is a partition, and $\alpha_i=|V_i|/n$ for every $i\in [k]$, then the number
	of edges~$xy$ of $G_\eps(n, d)$ with $x<y$ and $x\in V_i$, $y\in V_j$ for
	some $i<j$ is at most $(h_d(\alpha)+d\eps)n^2/2^{d+2}$,
	where $\alpha=(\alpha_1, \dots, \alpha_k)$.
\end{lemma}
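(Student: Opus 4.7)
The plan is to proceed by induction on $d$. The base case $d=0$ is immediate: $G_\eps(n,0)$ has no edges, and the right-hand side is nonnegative because $h_0(\alpha)=2(1-\|\alpha\|^2)\ge 0$ for every $\alpha\in\Delta_k$.

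For the inductive step, I would decompose the edges we need to count according to the canonical bipartition $[n/2]\dcup(n/2,n]$ into three groups: edges inside $[n/2]$, edges inside $(n/2,n]$, and edges of the bipartite graph $B=B_\eps(n,d)$. Writing $V_i^L=V_i\cap[n/2]$ and $V_i^R=V_i\cap(n/2,n]$, and setting $\beta_i=2|V_i^L|/n$ and $\gamma_i=2|V_i^R|/n$, one gets $\beta,\gamma\in\Delta_k$ with $2\alpha=\beta+\gamma$, which is exactly the hypothesis of Lemma~\ref{lem:hd-rek}. Applying the inductive hypothesis to each half bounds the number of relevant edges by $(h_{d-1}(\beta)+(d-1)\eps)(n/2)^2/2^{d+1}$ inside $[n/2]$ and by the analogous quantity in $(n/2,n]$.

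The bipartite part requires slightly more care. Every edge of $B$ automatically satisfies $x<y$, so the count reduces to $\sum_{i<j}e_B(V_i^L,V_j^R)$. To keep the error linear in $d$, I would group by the upper index: writing $X_j=\bigcup_{i<j}V_i^L$, apply the quasirandomness condition to the pair $(X_j,V_j^R)$ for each $j\in[k]$ and sum. This yields a main term $\frac{n^2}{2^{d+1}}\sum_{i<j}\beta_i\gamma_j$ with accumulated error at most $k\cdot\frac{\eps n^2}{k\cdot 2^{d+2}}=\frac{\eps n^2}{2^{d+2}}$.

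Adding the three contributions and factoring out $n^2/2^{d+3}$, the main term becomes $h_{d-1}(\beta)+h_{d-1}(\gamma)+4\sum_{i<j}\beta_i\gamma_j$, which Lemma~\ref{lem:hd-rek} bounds by $2h_d(\alpha)$; this delivers the target coefficient $h_d(\alpha)n^2/2^{d+2}$. The errors combine to $(d-1)\eps n^2/2^{d+2}+\eps n^2/2^{d+2}=d\eps n^2/2^{d+2}$, closing the induction. The main obstacle is precisely this error bookkeeping: a naive application of quasirandomness separately to each of the $\binom{k}{2}$ class pairs $(V_i^L,V_j^R)$ would produce an extra factor of $(k-1)/2$ in the error term and ruin the clean linear-in-$d$ bound, so grouping the bipartite edges by the upper index $j$ before invoking quasirandomness is the essential trick.
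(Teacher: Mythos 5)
Your proof is correct and follows essentially the same route as the paper: induction on $d$, the same three-way decomposition of the edges across the bipartition $[n/2]\dcup(n/2,n]$, and Lemma~\ref{lem:hd-rek} applied to $\beta,\gamma$ with $2\alpha=\beta+\gamma$. The only cosmetic difference is that you group the bipartite edges by the upper index $j$ while the paper groups by the lower index $i$; both amount to $k$ applications of quasirandomness and yield the same error term $\eps n^2/2^{d+2}$.
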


\begin{proof}
	We argue by induction on $d$. The base case, $d=0$, is clear,
	because $G_\eps(n, 0)$ has no edges at all. Now consider the induction
	step from $d-1$ to $d$ and let $|V_i\cap [1, n/2]|=\beta_i n/2$ as well
	as $|V_i\cap (n/2, n]|=\gamma_i n/2$ for every $i\in [k]$. Clearly the
	vectors $\beta=(\beta_1, \dots, \beta_k)$ and $\gamma=(\gamma_1, \dots, \gamma_k)$
	are in $\Delta_k$ and satisfy $2\alpha=\beta+\gamma$.
	There are three kinds of edges to consider:
	\begin{enumerate}[label=\alabel]
		\item\label{it:e1} those with $x, y\in [1, n/2]$,
		\item\label{it:e2} those with $x, y\in (n/2, n]$,
		\item\label{it:e3} and those with $1\le x\le n/2<y\le n$.
	\end{enumerate}

	By the induction hypothesis there are at most
	\[
		\frac{(h_{d-1}(\beta)+(d-1)\eps)(n/2)^2}{2^{d+1}}
		\quad \text{ and } \quad
		\frac{(h_{d-1}(\gamma)+(d-1)\eps)(n/2)^2}{2^{d+1}}
	\]
	edges of types~\ref{it:e1} and~\ref{it:e2}, respectively.
	Moreover, by quasirandomness, there are at most
	\[
		\sum_{i=1}^{k}\Big(\frac{\beta_i\sum_{j>i}\gamma_j}{2^{d-1}}\Big(\frac{n}{2}\Big)^2+\frac{\eps n^2}{k2^{d+2}}\Big)
		=
		\frac{\big(4\sum_{i<j}\beta_i\gamma_j +2\eps\big)n^2}{2^{d+3}}
	\]
	edges of type~\ref{it:e3}.
	Altogether, the number $\Omega$ of edges under consideration satisfies
	\[
		\frac{\Omega}{n^2/2^{d+3}}
		\le
		h_{d-1}(\beta)+h_{d-1}(\gamma)+4\sum_{i<j}\beta_i\gamma_j+2d\eps\,,
	\]
	and by Lemma~\ref{lem:hd-rek} the right side is at most $2h_d(\alpha)+2d\eps$.
\end{proof}

Now the upper bound $\rho(P_k)\le\frac{k-1}{2k}$ we still seek to establish
is a straightforward consequence of the following result.

\begin{lemma}
	For every $\eps>0$ there are positive integers $d$ and $n$ such that
	$G=G_\eps(n, d)$ is defined and every $P_k$-free subgraph $G'$ of $G$
	satisfies $e(G')\le (\frac{k-1}{2k}+\eps)e(G)$.
\end{lemma}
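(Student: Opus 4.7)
The plan is to use a canonical $k$-partition of $V(G)$ derived from $P_k$-freeness and feed it into the previous lemma. Given $\eps>0$, I would first fix a sufficiently large integer $d$ (to be specified momentarily) and then a sufficiently large integer $n$ so that $G := G_\eps(n,d)$ is defined. Consider an arbitrary $P_k$-free subgraph $G'$ of $G$; the aim is to show $e(G')\le \bigl(\tfrac{k-1}{2k}+\eps\bigr)e(G)$.

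The key step is a Gallai-Roy-style coloring of $V(G)$ that converts the combinatorial hypothesis into a partition. For each $v\in V(G)$ let $\phi(v)$ denote the number of edges of a longest monotone ascending path contained in $G'$ and ending at $v$. Since $G'$ has no $P_k$, we have $\phi(v)\in \{0,1,\dots,k-1\}$. Moreover, for every edge $xy\in E(G')$ with $x<y$, appending $y$ to an ascending path of length $\phi(x)$ ending at $x$ yields $\phi(y)\ge \phi(x)+1$. Setting $V_i=\phi^{-1}(i-1)$ for $i\in [k]$ therefore yields an (ordered) partition $[n]=V_1\dcup\cdots\dcup V_k$ with the property that every edge $xy$ of $G'$ with $x<y$ has $x\in V_i$, $y\in V_j$ for some $i<j$. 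In particular, $e(G')$ is bounded above by the quantity estimated in the previous lemma.

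Writing $\alpha=(\alpha_1,\dots,\alpha_k)\in \Delta_k$ with $\alpha_i=|V_i|/n$, the previous lemma gives
\[
	e(G')\le \bigl(h_d(\alpha)+d\eps\bigr)\frac{n^2}{2^{d+2}}.
\]
By Cauchy-Schwarz, $1=\bigl(\sum_i \alpha_i\bigr)^2\le k\|\alpha\|^2$, so $1-\|\alpha\|^2\le (k-1)/k$, and hence
\[
	h_d(\alpha)\le \frac{(d+2)(k-1)}{k}+k\sum_{r=1}^d \frac{1}{r}.
\]
Combining this with $e(G)=dn^2/2^{d+1}$ from \eqref{eq:2345} yields
\[
	\frac{e(G')}{e(G)}\le \frac{k-1}{2k}+\frac{k-1}{kd}+\frac{k}{2d}\sum_{r=1}^d \frac{1}{r}+\frac{\eps}{2}.
\]
Since $\sum_{r=1}^d 1/r=O(\log d)$, the two middle correction terms are $O((\log d)/d)$ and can be made smaller than $\eps/2$ by choosing $d$ large enough in terms of $k$ and $\eps$. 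This delivers the desired bound $e(G')\le \bigl(\tfrac{k-1}{2k}+\eps\bigr)e(G)$.

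I do not foresee a serious obstacle here: the conceptual input is just the longest-path coloring, which is a classical trick, and the analytic input is the symmetry bound $\|\alpha\|^2\ge 1/k$. The real work has already been done, first in establishing the quadratic inequality of Lemma~\ref{lem:hd-rek}, and then in using it to show that $h_d$ controls ascending edges in $G_\eps(n,d)$.
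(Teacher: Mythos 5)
Your proposal is correct and follows essentially the same route as the paper: the longest-ascending-path colouring (the paper's $f$ is your $\phi+1$), the bound $\|\alpha\|^2\ge 1/k$ applied to $h_d$, and the choice of $d$ large enough to absorb the $O((\log d)/d)$ error terms, which the paper packages as the single condition $2+k\sum_{r=1}^d 1/r\le\eps d$. No gaps.
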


\begin{proof}
	Since $\sum_{r=1}^d 1/r=\log d+O(1)=o(d)$, we can choose $d$ so large that
	\begin{equation}\label{eq:2305}
		2+k \sum_{r=1}^d \frac 1r
		\le
		\eps d\,.
	\end{equation}
	Let $n$ be an arbitrary number for which the graph $G=G_\eps(n, d)$ is defined
	and consider any $P_k$-free subgraph $G'$ of $G$.

	For each vertex $x\in [n]$ let $f(x)$ be the largest positive integer such
	that~$G'$ contains an ascending path of length $f(x)-1$ ending in $x$.
	By our assumption on $G'$, this function only attains values in $[k]$.
	Moreover, if $xy$ with $x<y$ is an edge in $G'$, then $f(x)<f(y)$.
	Thus, setting $\alpha_i=|f^{-1}(i)|/n$ for every $i\in [k]$ and
	$\alpha=(\alpha_1, \dots, \alpha_k)$, the previous lemma and~\eqref{eq:2345}
	yield
	\[
		\frac{e(G')}{e(G)}
		\le
		\frac{h_d(\alpha)+d\eps}{2d}\,.
	\]
	Due to $\|\alpha\|^2\ge 1/k$ we also have
	\[
		h_d(\alpha)
		\le
		\frac{d(k-1)}k+2+k\sum_{r=1}^d \frac1r
		\overset{\eqref{eq:2305}}{\le}
		d\left(\frac{k-1}k+\eps\right)\,,
	\]
	which leads indeed to $e(G')\le (\frac{k-1}{2k}+\eps)e(G)$.
\end{proof}

This completes the proof of Theorem~\ref{thm:main}.

\section{Concluding remarks}
The case $k=3$ of Theorem~\ref{thm:main} yields a positive answer
to~\cite{ARS}*{Problem~9}. Here we discuss a few further problems
for future research.

Let $C_\ell$ denote the {\it ordered cycle} with vertex set $V(C_\ell)=[\ell]$
and edge set defined by~$E(C_\ell)=\{\{i, i+1\}\colon i\in [\ell-1]\}\dcup \{\{1, \ell\}\}$.
Since $C_\ell$ contains a copy of the monotone path $P_{\ell-1}$, Theorem~\ref{thm:main}
yields $\rho(C_\ell)\ge\frac{\ell-2}{2\ell-2}$ leading to the following problem.
\begin{prob}
	Determine $\rho(C_\ell)$ for every fixed $\ell\ge 4$
\end{prob}

The obvious inequality~\eqref{eq:ub} suggests the next question.

\begin{prob}
	Characterise the class $\{F\colon \rho(F)=\vpi(F)\}$.
\end{prob}

For instance, all ordered cliques $K_r$ are in this class. Are there any other
such graphs? Similarly, when $F=P_k$ is a path, then Theorem~\ref{thm:main}
yields $\rho(F)=\frac12\vpi(F)$; we may thus ask for a characterisation of the
class $\{F\colon \rho(F)=\frac12\vpi(F)\}$. Are there any graphs $F$ satisfying
\[
	\frac12\vpi(F)<\rho(F)<\vpi(F)\,\text{?}
\]

It would also be interesting to know whether there is any stability result
accompanying Theorem~\ref{thm:main}.

\begin{prob}
	Given $k\ge 2$ and $\eps>0$, describe the structure of all graphs $G$ with the
	property that every subgraph $G'$ satisfying $e(G')\ge(\frac{k-1}{2k}+\eps)e(G)$
	contains a copy of $P_k$.
\end{prob}

In particular, one may ask whether such graphs need to have
any resemblance to~$G_\eta(n, d)$ for some small $\eta=\eta(k, \eps)$.
Perhaps one should also
assume here that $G$ be dense, i.e., that $e(G)\ge \eps v(G)^2$.

Finally, the definition of $\rho(\cdot)$ generalises straightforwardly to
hypergraphs. A special case studied by Erd\H{o}s, Hajnal, and Szemer\'edi~\cite{EHS}
in the context of independent sets in shift graphs  concerns the
ascending $r$-uniform path $P^{(r)}_2$ of length $2$, i.e., the hypergraph
on~$[r+1]$ with edges $[r]$ and $\{2, \dots, r+1\}$. In the current notation,
they showed
\[
	\rho(P^{(r)}_2)
	\ge
	\begin{cases}
		\frac12-\frac1r    & \text{ if $r$ is even} \\
		\frac12-\frac1{2r} & \text{ if $r$ is odd.}
	\end{cases}
\]
For $r=4$ the quantitative improvement $\rho(P^{(4)}_2)\ge \frac38$
was obtained in~\cite{ARS} (see the footnote on page~9).

\begin{prob}
	Determine $\rho(P^{(r)}_2)$ for all $r\ge 3$. In particular, is
	${\rho(P^{(3)}_2)=\frac13}$ true?
\end{prob}

Of course, one may also ask the same question for longer paths.

\begin{bibdiv}
	\begin{biblist}

		\bib{AKS}{article}{
		author={Alon, Noga},
		author={Krech, Anja},
		author={Szab\'{o}, Tibor},
		title={Tur\'{a}n's theorem in the hypercube},
		journal={SIAM J. Discrete Math.},
		volume={21},
		date={2007},
		number={1},
		pages={66--72},
		issn={0895-4801},
		review={\MR{2299695}},
		doi={10.1137/060649422},
		}

		\bib{ARS}{article}{
		author={Arman, Andrii},
		author={R\"{o}dl, Vojt\v{e}ch},
		author={Sales, Marcelo Tadeu},
		title={Independent sets in subgraphs of a shift graph},
		journal={Electron. J. Combin.},
		volume={29},
		date={2022},
		number={1},
		pages={Paper No. 1.26, 11},
		review={\MR{4395933}},
		doi={10.37236/10453},
		}

		\bib{Ax}{article}{
		author={Axenovich, Maria},
		title={A class of graphs of zero Tur\'{a}n density in a hypercube},
		journal={Combin. Probab. Comput.},
		volume={33},
		date={2024},
		number={3},
		pages={404--410},
		issn={0963-5483},
		review={\MR{4730908}},
		doi={10.1017/s0963548324000063},
		}

		\bib{Er64}{article}{
		author={Erd{\H{o}}s, P.},
		title={On extremal problems of graphs and generalized graphs},
		journal={Israel Journal of Mathematics},
		volume={2},
		date={1964},
		pages={183--190},
		issn={1565-8511},
		doi={10.1007/BF02759942},
		}

		\bib{EHS}{article}{
		author={Erd\H{o}s, P.},
		author={Hajnal, A.},
		author={Szemer\'{e}di, E.},
		title={On almost bipartite large chromatic graphs},
		conference={
			title={Theory and practice of combinatorics},
		},
		book={
			series={North-Holland Math. Stud.},
			volume={60},
			publisher={North-Holland, Amsterdam},
		},
		date={1982},
		pages={117--123},
		review={\MR{806975}},
		}

		\bib{ErSi66}{article}{
		author={Erd{\H{o}}s, P.},
		author={Simonovits, M.},
		title={A limit theorem in graph theory},
		journal={Studia Sci. Math. Hungar},
		volume={1},
		date={1966},
		pages={51--57},
		issn={0081-6906},
		review={\MR{0205876 (34 \#5702)}},
		}

		\bib{ErSt46}{article}{
		author={Erd{\H{o}}s, P.},
		author={Stone, A. H.},
		title={On the structure of linear graphs},
		journal={Bull. Amer. Math. Soc.},
		volume={52},
		date={1946},
		pages={1087--1091},
		issn={0002-9904},
		review={\MR{0018807 (8,333b)}},
		}

		\bib{Ko76}{article}{
		author={Kosto\v{c}ka, E. A.},
		title={Piercing the edges of the $n$-dimensional unit cube},
		language={Russian},
		journal={Diskret. Analiz},
		date={1976},
		pages={55--64, 79},
		review={\MR{467534}},
		}

		\bib{PT06}{article}{
		author={Pach, J\'{a}nos},
		author={Tardos, G\'{a}bor},
		title={Forbidden paths and cycles in ordered graphs and matrices},
		journal={Israel J. Math.},
		volume={155},
		date={2006},
		pages={359--380},
		issn={0021-2172},
		review={\MR{2269435}},
		doi={10.1007/BF02773960},
		}

	\end{biblist}
\end{bibdiv}

\end{document}